\newtheorem{theorem}{Theorem}
\theoremstyle{definition}
\theoremstyle{remark}
\numberwithin{equation}{section}
\def\fil{\Phi}
\def\rfi{Ran(\Phi)}
\begin{document}

\title{On the Choi-Effros multiplication}
\author{Bebe Prunaru}

\address{Institute of Mathematics ``Simion Stoilow''
  of the Romanian
Academy,
P.O. Box 1-764,
 RO-014700 Bucharest, 
Romania}
\email{Bebe.Prunaru@imar.ro}

\keywords{operator algebra, 
completely positive projection}
\subjclass[2000]{Primary: 46L07; 
Secondary: 46L05}

\begin{abstract}

A short proof is given for the 
well-known Choi-Effros theorem on the structure 
of ranges of completely positive 
projections.

\end{abstract}

\maketitle

In this note an alternate proof is given 
for the following well-known 
and basic 
theorem of M. D. Choi and E. G. Effros 
(see Theorem 3.1 in \cite{CE}):

\begin{theorem}

Let $A$ be a 
$C^*$-algebra and let 
$\fil:A\to A$ 
be a completely positive, contractive and 
idempotent linear map.
Then there exist a 
  $C^*$-algebra  $B$  and a complete 
order isomorphism
$\rho:B\to\rfi$
such that 
$\rho(ab)=\fil(\rho(a)\rho(b))$
for all $a,b\in B.$

\end{theorem}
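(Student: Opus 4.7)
The plan is to realize $\rfi$ as the image of a genuine $C^*$-algebra $B$ sitting inside a Stinespring dilation of $\fil$, with $\rho$ given by an explicit compression map. First, faithfully represent $A \subseteq B(H)$ and apply Stinespring's theorem to the completely positive contraction $\fil$, producing a $*$-representation $\pi : A \to B(K)$ and a contraction $V : H \to K$ such that
\[
\fil(a) \;=\; V^{*} \pi(a) V \qquad (a \in A).
\]
The idempotence $\fil^{2} = \fil$ then becomes the identity $V^{*} \pi(V^{*} \pi(a) V) V = V^{*} \pi(a) V$, which one should read as a strong compatibility between $\pi(A)$ and the projection $p := V V^{*} \in B(K)$.

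My candidate for the $C^*$-algebra $B$ is generated inside $B(K)$ by operators of the form $p \pi(a) p$ with $a \in \rfi$ (acting on the corner $pK$), and my candidate for $\rho$ is the compression $\rho(b) := V^{*} b V$. Complete positivity and contractivity of $\rho$ are automatic from Stinespring, and injectivity follows from the observation that for $a \in \rfi$ one has $V^{*} \pi(a) V = \fil(a) = a$, so that the compression reproduces elements of $\rfi$ faithfully. The order-theoretic part of the conclusion, that $\rho$ is a complete order isomorphism, should then reduce to standard properties of compressions by a contraction.

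The main obstacle will be to pin down $B$ so that it is actually closed under the operator product of $B(K)$ (hence a bona fide $C^*$-algebra) and to verify the compatibility
\[
\rho(b_{1} b_{2}) \;=\; \fil\bigl(\rho(b_{1}) \rho(b_{2})\bigr) \qquad (b_{1},b_{2} \in B).
\]
Granting these two points, associativity of the Choi--Effros product $a \star b := \fil(ab)$ on $\rfi$ is immediate from associativity of the operator product on $B$, and the statement of the theorem falls out by setting the desired $*$-algebra structure on the abstract model $B$. I expect the key compatibility to follow from a \emph{multiplicative-domain} style argument for the compression $V^{*}(\cdot)V$, using the idempotence $\fil^{2} = \fil$ in an essential way to move $\fil$ past the product; this is where I see the whole weight of the theorem concentrated.
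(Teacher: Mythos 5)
Your construction cannot work as stated, and the failure is already visible on products of two generators. Assume for concreteness that $\fil$ is unital, so that the Stinespring $V$ is an isometry and $p=VV^*$ is genuinely a projection (for a merely contractive $\fil$ it need not be, which is a separate problem). Then the map $b\mapsto V^*bV$ is a $*$-isomorphism of $B(pK)$ onto $B(H)$ --- it is conjugation by the unitary $V:H\to pK$ --- and it sends the generator $p\pi(a)p$ to $V^*\pi(a)V=\fil(a)=a$ for $a\in\rfi$. Consequently your $B$ is carried $*$-isomorphically onto the $C^*$-subalgebra of $A$ generated by $\rfi$, not onto $\rfi$ itself. Explicitly, using $pV=V$ and $V^*p=V^*$, for $b_i=p\pi(a_i)p$ with $a_i\in\rfi$ one gets
\[
\rho(b_1b_2)=V^*\pi(a_1)V\,V^*\pi(a_2)V=a_1a_2,
\]
which in general lies outside $\rfi$ and differs from the required value $\fil(\rho(b_1)\rho(b_2))=\fil(a_1a_2)$ exactly when $a_1a_2\notin\rfi$. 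So $\rho$ neither maps $B$ into $\rfi$ nor satisfies the compatibility identity. A multiplicative-domain argument cannot rescue this: such an argument would only yield $\rho(b_1b_2)=\rho(b_1)\rho(b_2)$, i.e.\ that $\rfi$ is closed under the ordinary product of $A$, which is false in general and would render the theorem vacuous. The compression by an isometry is faithful on everything, whereas the theorem requires a map that \emph{collapses} $x_1x_2$ and $\fil(x_1x_2)$ to the same element of $B$.

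What is missing, therefore, is a quotient rather than a corner. The paper's route is to show that $Ker(\fil)$ coincides with the closed right ideal $J$ of $C^*(\rfi)$ generated by the elements $xy-\fil(xy)$ with $x,y\in\rfi$: the inclusion $J\subseteq Ker(\fil)$ follows from the Kadison--Schwarz inequality applied to the positive elements $z=\fil(x^*x)-x^*x\in Ker(\fil)$, which forces $\fil(zy)=0$ for all $y\in A$, and the reverse inclusion follows by induction on the length of words $x_1\cdots x_k$ in elements of $\rfi$. This identifies $Ker(\fil)$ as a closed two-sided ideal, and $B=C^*(\rfi)/Ker(\fil)$ with the induced map $\rho$ onto $\rfi$ has all the required properties. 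In effect, the algebra you constructed is $C^*(\rfi)$ \emph{before} this quotient is taken; the entire content of the theorem is concentrated in showing that the quotient exists, i.e.\ that $Ker(\fil)$ is an ideal.
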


\begin{proof}

We may assume that $A$ is 
generated, 
as a $C^*$-algebra, by $\rfi$.
Let $J$ denote the closed right  
ideal of $A$ 
 generated by all operators 
of the form
$xy-\fil(xy)$
with $x,y\in\rfi$.
We will show that $Ker(\fil)=J$.
Let $z\in Ker(\fil)$    with $z\ge 0$ and let 
$y\in A$.
Then, by the Kadison-Schwarz
 inequality,
$$\fil(zy)\fil(zy)^*\le \|z^{1/2}y\|^2
\fil(z)=0.$$
In particular this holds true 
 when $z=\fil(x^*x)-x^*x$ for some
  $x\in\rfi$.
 This shows that $J\subset Ker(\fil)$.
 We will now show, by induction over $k$, 
 that if 
 $u=x_1\cdots x_k$
 with $x_j\in\rfi$ for 
  $j=1,\dots,k$
 then $u-\fil(u)\in J$.
 When $k=1$ or $k=2$ this is obvious.
 Suppose that $k\ge3$ and
  assume
  it holds for $k-1$. 
  Write 
  $u=u_1+u_2$ where
  $$u_1=(x_1x_2-\fil(x_1x_2))x_3\cdots
 x_k$$
  and
  $$u_2=\fil(x_1x_2)x_3\cdots x_k.$$
  Then $u_1\in J$ hence 
  $\fil(u_1)=0$,
  and $u_2-\fil(u_2)\in J$ 
  by the induction hypothesis.
  It then follows that $Ker(\fil)=J$
  and, in particular, that $Ket(\fil)$
  is a bilateral  ideal in $A$.
  Let $B=A/Ker(\fil)$  and let 
  $\rho:B\to\rfi$ 
  be the quotient map
   induced by $\fil$. 
  It is now routine to see that
  the couple $(B,\rho)$ 
  has all the properties we were looking for.

\end{proof}

\end{document}